\newcommand{\Z}{\mathbb{Z}}
    \newtheorem{theorem}{Theorem}[section]
    \newtheorem{lemma}[theorem]{Lemma}
    \newtheorem{prop}[theorem]{Proposition}
    \newtheorem*{thm*}{Theorem}
\theoremstyle{definition}
    \newtheorem{definition}[theorem]{Definition}
    \newtheorem{example}[theorem]{Example}
\theoremstyle{remark}
\numberwithin{equation}{section}
\title{Sums of Schubert structure constants with bounded Coxeter length}
\author{Ada Stelzer}
\address{Dept.~of Mathematics, U.~Illinois at Urbana-Champaign, Urbana, IL 61801, USA}
\email{astelzer@illinois.edu}
\date{16 June 2025}
\begin{document}

\maketitle

\begin{abstract}
    Pak--Robichaux recently introduced a signed puzzle rule for Schubert structure constants, which they use to show that sums $\gamma_k(n)$ of these constants with a bounded number of inversions are polynomial. We give a different, conceptual proof of their theorem. Our argument computes the lead term of $\gamma_k(n)$ and extends to all classical Lie types.
\end{abstract}

\section{Introduction}

Let $\{G_n\}_{n\geq 1}\in\{\{SL_{n+1}\}, \{SO_{2n+1}\}, \{SP_{2n}\}, \{SO_{2n}\}\}_{n\geq 1}$ be one of the families of complex classical Lie groups, each with maximal Borel subgroup $B$ and maximal torus $T$. Each \emph{generalized flag variety} $G_n/B$ has a cell decomposition with cells indexed by elements of the \emph{Weyl group} $W_n := N(T)/T$. The \emph{Schubert varieties} $\mathfrak{X}_w$ for $w\in W_n$ are the Zariski closures of these cells; via Poincar\'e duality, the Schubert varieties yield cohomology classes $\sigma_w\in H^\star(G_n/B)$. These \emph{Schubert classes} in fact form a $\Z$-linear basis for $H^\star(G_n/B)$. Thus there exist integers, the \emph{Schubert structure constants} $c_{u, v}^w$, such that
\[\sigma_u\smile \sigma_v = \sum_{w\in W_n}c_{u, v}^w\sigma_w.\]
The Schubert structure constants are known to be nonnegative for geometric reasons, and it is a major open problem to give a positive combinatorial rule for them.

For a fixed choice of classical type, consider the sum of Schubert structure constants
\[\gamma_k(n) := \sum_{\substack{u, v, w\in W_n \\ \ell(w) = k}}c_{u, v}^w,\]
where $\ell(w)$ denotes the \emph{Coxeter length} of $w$. Recently, Pak--Robichaux introduced a signed puzzle rule \cite[Theorem 1.1]{PR} for $c_{u, v}^w$ in the case where $G_n = SL_{n+1}$ is the special linear group and $W_n = \mathcal{S}_{n+1}$ is the symmetric group. They give the following result as an application:

\begin{theorem}[{\cite[Theorem 1.2]{PR}}]\label{thm:inspo}
    If $\{G_n\} = \{SL_{n+1}\}$, then $\gamma_k(n)$ is polynomial in $n$.
\end{theorem}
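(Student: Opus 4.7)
The plan is to decompose $\gamma_k(n)$ into a finite sum of terms manifestly polynomial in $n$, by exploiting the stability of Schubert structure constants under parabolic restriction. The first observation is that if $c_{u,v}^w$ contributes to $\gamma_k(n)$, then $\ell(u) + \ell(v) = \ell(w) = k$ and $u, v \leq w$ in Bruhat order; hence the Coxeter supports satisfy $\mathrm{supp}(u), \mathrm{supp}(v) \subseteq \mathrm{supp}(w)$, with $|\mathrm{supp}(w)| \leq \ell(w) = k$. Thus every contributing triple lies entirely in a parabolic subgroup $W_J \leq W_n$ with $|J| \leq k$.

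Next I would invoke parabolic stability of Schubert structure constants: for $u, v, w \in W_J$, the constant $c_{u,v}^w$ computed in $W_n$ agrees with the one computed intrinsically in $W_J$. In type $A$, where $J \subseteq \{1, \dots, n\}$ is a set of simple-reflection indices, the isomorphism type of $W_J$ is recorded by the composition $\mu(J) = (m_1, \dots, m_r)$ giving the sizes of the maximal consecutive runs in $J$, and $W_J \cong \mathcal{S}_{m_1+1} \times \cdots \times \mathcal{S}_{m_r+1}$. A stars-and-bars count shows that the number of subsets $J$ of type $\mu$ in $\{1, \dots, n\}$ equals $\binom{n - |\mu| + 1}{r}$, a polynomial in $n$ of degree $r$.

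Assembling these ingredients,
\[
\gamma_k(n) = \sum_{\mu} \binom{n - |\mu| + 1}{r(\mu)} \cdot C_\mu, \qquad C_\mu := \sum_{\substack{(u,v,w) \in W_\mu^{3} \\ \ell(w) = k}} c_{u,v}^w,
\]
a finite sum over compositions $\mu$ with $|\mu| \leq k$, each term an $n$-independent constant times a polynomial in $n$; hence $\gamma_k(n)$ is polynomial. The top-degree $n^k$ contribution arises uniquely from $\mu = (1^k)$, where $W_\mu \cong (\mathcal{S}_2)^k$ and $w$ must be the longest element; a direct enumeration gives $C_{(1^k)} = 2^k$, so the lead term is $\frac{2^k}{k!}\, n^k$.

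For the remaining classical types, the same template applies once $\mathrm{supp}(w)$ is reinterpreted as a subdiagram of the relevant Dynkin diagram: such subdiagrams of size $\leq k$ are disjoint unions of smaller classical subdiagrams, the count of subdiagrams of a fixed isomorphism type is polynomial in $n$, and parabolic stability again reduces each structure constant to an intrinsic constant. The principal obstacle is establishing parabolic stability uniformly across types: in type $A$ it follows from the stability of Schubert polynomials under the inclusion $\mathcal{S}_n \hookrightarrow \mathcal{S}_{n+1}$, while in types $B$, $C$, $D$ one needs the analogous theory of Billey--Haiman or Ikeda--Naruse (double) Schubert polynomials, or a direct geometric argument via Levi embeddings.
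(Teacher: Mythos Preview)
Your approach is essentially the paper's: partition the length-$k$ elements $w\in W_n$ by the isomorphism type of their Dynkin support, invoke stability of Schubert structure constants under Dynkin embeddings (the paper's Proposition~\ref{prop:equiembed}, derived there uniformly from the equivariant restriction formula rather than from type-by-type Schubert-polynomial theory), and observe that the number of supports of each fixed type is polynomial in $n$. The paper packages this slightly differently, grouping by the finer equivalence $w\sim w'$ (same support type \emph{and} same element under the isomorphism), but the content is the same.

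One correction is needed, however: as written, your displayed identity overcounts. A given $w\in W_n$ with $\mathrm{supp}(w)=J_0$ lies in $W_J$ for every $J\supseteq J_0$, so if $C_\mu$ ranges over \emph{all} $w\in W_\mu$ of length $k$ then such a $w$ gets counted once for each $J$ of some type $\mu$ with $J_0\subseteq J$ and $|J|\le k$, not just once. (Concretely, for $k=3$ the element $s_1s_2s_1\in\mathcal{S}_4$ contributes to your $C_{(3)}$ even though its support has type $(2)$.) The fix is to restrict the sum defining $C_\mu$ to those $w\in W_\mu$ with \emph{full} support; then the identity $\gamma_k(n)=\sum_\mu\binom{n-|\mu|+1}{r(\mu)}\,C_\mu$ is correct and polynomiality follows. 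Your lead-term computation is unaffected by this, since the unique length-$k$ element of $(\mathcal{S}_2)^k$ already has full support.
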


In \cite[Section 8.3]{PR}, the authors state that it would be interesting to find a more conceptual argument for Theorem~\ref{thm:inspo}, especially one which explicitly determines the degree (or more) of $\gamma_k(n)$. We give such an argument, proving the following strengthening of Theorem~\ref{thm:inspo}.

\begin{theorem}\label{thm:main}
   Let $\{G_n\} \in \{\{SL_{n+1}\}, \{SO_{2n+1}\}, \{SP_{2n}\}, \{SO_{2n}\}\}$. For sufficiently large $n$, $\gamma_k(n)$ is a polynomial in $n$ with lead term 
   \[LT(\gamma_k(n)) = \frac{(2n)^k}{k!}.\]
\end{theorem}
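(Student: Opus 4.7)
My plan is to reorganize $\gamma_k(n) = \sum_{\ell(w) = k} \Gamma(w)$, where $\Gamma(w) := \sum_{u, v \in W_n} c_{u,v}^w$, by the combinatorial type of the support of $w$ on the Dynkin diagram. Two structural properties of $\Gamma$ drive the argument. \textbf{Stability:} $\Gamma(w)$ depends on $w$ only through its image in the direct limit $W_\infty$, not on the ambient $W_n$, once $n$ is large enough to contain $w$; this follows from stability of Schubert structure constants, classical in type $A$ via Schubert polynomials and extended to $B, C, D$ via the analogous polynomial models (e.g.\ Billey--Haiman). \textbf{Factorization:} if $w = w_1 w_2$ with $\mathrm{supp}(w_1), \mathrm{supp}(w_2)$ disjoint and pairwise non-adjacent in the Dynkin diagram, then $c_{u, v}^w$ vanishes unless $u, v$ admit compatible factorizations $u = u_1 u_2$, $v = v_1 v_2$ with each $u_j, v_j$ supported in $\mathrm{supp}(w_j)$, in which case $c_{u,v}^w = c_{u_1, v_1}^{w_1} c_{u_2, v_2}^{w_2}$. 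Hence $\Gamma(w_1 w_2) = \Gamma(w_1) \Gamma(w_2)$. Combined with the direct computation $\Gamma(s_i) = 2$ (only $(e, s_i)$ and $(s_i, e)$ contribute), this yields $\Gamma(w) = 2^k$ for any fully commuting product $w = s_{i_1} \cdots s_{i_k}$ of pairwise non-adjacent simple reflections.

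Using these facts, I partition the length-$k$ elements of $W_n$ into finitely many \emph{shape classes} $S$: each shape is an isomorphism type of ``support with element'' (the connected components of the support, together with the element inhabiting each component), considered up to translation along the Dynkin diagram. For each shape $S$ with $r$ components, the count $N_S(n)$ is a polynomial in $n$ of degree $r$, since the $r$ components slide independently subject to non-overlap and non-adjacency constraints. By stability $\Gamma$ is constant on $S$, and by factorization $\Gamma(S)$ equals the product of the $r$ component $\Gamma$-values. Thus $\gamma_k(n) = \sum_S N_S(n) \Gamma(S)$ is polynomial in $n$. The maximum $r$ for a length-$k$ shape is $k$, attained only when each component is a single simple reflection; in every classical type the count of such shapes is $\sim n^k / k!$ and their $\Gamma$-value is $2^k$, yielding the leading term $(2n)^k / k!$. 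All other shapes have $r \leq k-1$ components, hence contribute $O(n^{k-1})$.

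The main obstacle is verifying the factorization of structure constants over commuting supports. Stability is standard folklore and the Dynkin-diagram counting is elementary bookkeeping once the shape-class framework is in place. Factorization in type $A$ can be extracted from the polynomial identity $\mathfrak{S}_u \mathfrak{S}_v = \sum_w c_{u,v}^w \mathfrak{S}_w$ using that $\mathfrak{S}_{w_1 w_2} = \mathfrak{S}_{w_1} \mathfrak{S}_{w_2}$ whenever $w_1, w_2$ act on disjoint intervals of positions, together with a careful analysis of monomial supports to rule out cross-terms. The analogous argument goes through in types $B, C, D$ with the corresponding polynomial representatives. Once factorization is secured, the uniformity of the leading term $(2n)^k/k!$ across classical types reflects the fact that both $\Gamma(s_i) = 2$ and the asymptotic count $\sim n^k/k!$ of fully commuting length-$k$ products are type-independent.
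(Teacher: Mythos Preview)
Your argument follows the same architecture as the paper's: group length-$k$ elements of $W_n$ by the isomorphism type of their Dynkin support together with the element carried on it (the paper's equivalence relation $\sim$), use stability to see that $\Gamma$ is constant on each class, count each class by a polynomial $N_w(n)$, and identify the leading term as coming from the unique class with support $A_1^k$ via factorization and $\Gamma(s_i)=2$. The one substantive difference is in how you justify the two structural inputs: the paper proves stability and factorization uniformly and type-independently from the Andersen--Jantzen--Soergel equivariant restriction formula (its Propositions~\ref{prop:equiembed} and~\ref{prop:equiprod}), whereas you propose to extract them type by type from Schubert-polynomial representatives (Billey--Haiman in $B/C/D$). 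Both routes are valid, but the equivariant argument sidesteps exactly the ``careful analysis of monomial supports'' and the separate $B/C/D$ checks that you yourself flag as the main obstacle.

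One small imprecision worth fixing: in types $B$, $C$, $D$ your claim that $N_S(n)$ has degree equal to the total number $r$ of components is off by one whenever a component is anchored at the special node(s) and cannot slide; the paper's Proposition~\ref{prop:poly} shows the correct degree is the number of type-$A$ components. This does not affect your leading-term computation, since the $A_1^k$ class still uniquely achieves degree $k$, but it would matter if you went on to analyze lower-order terms.
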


See Theorem~\ref{thm:fullform} for a precise quantification of ``sufficiently large'' in each classical type. Our argument uses the \emph{equivariant restriction} formula of Andersen--Jantzen--Soergel \cite{AJS}, and specifically the known stability properties of Schubert structure constants it implies. The main idea is that as $n$ grows, the sum defining $\gamma_k(n)$ involves many distinct elements $w\in W_n$ for which the sum $\sum_{u, v\in W_n}c_{u, v}^w$ is the same. Formalizing this equivalence relation on $W_n$ allows one to rewrite the formula for $\gamma_k(n)$ using only Weyl group elements in $W_{2k}$. This reformulation makes the polynomality of $\gamma_k(n)$ obvious, and the $c_{u, v}^w$ contributing to the lead term of this polynomial are the easiest Schubert structure constants to compute. 

\section{Background on equivariant restrictions}
We give a brief overview of the results in Lie theory and equivariant cohomology used in our proof of Theorem~\ref{thm:main}, following the exposition and notation of \cite{RYY}. See \cite{RYY} for further reading and references. The four families of classical complex Lie groups, $SL_{n+1}$, $SO_{2n+1}$, $SP_{2n}$, and $SO_{2n}$, are characterized by certain (multi)graphs, the \emph{Dynkin diagrams} of types $A_n$, $B_n$, $C_n$, and $D_n$ respectively. Each node in a Dynkin diagram corresponds to a simple reflection generating the associated Weyl group. Different types of edges represent commutation relations between these generators, with non-adjacent nodes corresponding to commuting pairs of generators. Examples of the classical Dynkin diagrams for $n = 5$ are below.

\[\underbracket[0pt]{\vphantom{\dynkin[edge length = 0.7]D{}}\dynkin[label*, root radius=.08cm, edge length = 0.7]A5}_{A_5} \quad
\underbracket[0pt]{\vphantom{\dynkin[edge length = 0.7]D{}}\dynkin[label*, root radius=.08cm, edge length = 0.7]B5}_{B_5} \quad
\underbracket[0pt]{\vphantom{\dynkin[edge length = 0.7]D{}}\dynkin[label*, root radius=.08cm, edge length = 0.7]C5}_{C_5} \quad
\underbracket[0pt]{\dynkin[label*, root radius=.08cm, edge length = 0.7]D5}_{D_5}\]

For our purposes, we only need to know that Dynkin diagrams encode the generators and relations of Weyl groups. In particular, call a map $\iota:\Delta\to\Delta'$ of Dynkin diagrams an \emph{embedding} if it realizes $\Delta$ as an induced subgraph of $\Delta'$. Then $\iota$ induces an embedding $W\hookrightarrow W'$ of the corresponding Weyl groups. For example, the fact that $B_n$, $C_n$, and $D_n$ contain embedded copies of $A_{n-1}$ indicates that the associated Weyl groups contain symmetric groups as subgroups.

Now, let $G_n/B$ be a generalized flag variety with Weyl group $W_n$ and simple roots $\{\alpha_1,\dots, \alpha_n\}$. There is a torus $T$ acting on $G_n/B$ on the left, which stabilizes each Schubert variety $\mathfrak{X}_w$ ($w\in W_n$). Thus each $\mathfrak{X}_w$ gives rise to an \emph{equivariant Schubert class} $\xi_w$ in the $T$-equivariant cohomology $H^\star_T(G_n/B)$. The equivariant Schubert classes form a basis for $H^\star_T(G_n/B)$ as a $H^\star_T(pt)$-module, where we identify $H^\star_T(pt)$ with the polynomial ring $\Z[\alpha_1,\dots, \alpha_n]$. Thus we obtain \emph{equivariant Schubert structure constants} $C_{u, v}^w\in\Z[\alpha_1,\dots, \alpha_n]$, defined by the equation
\[\xi_u\smile\xi_v = \sum_{w\in W_n}C_{u, v}^w\xi_w.\]
Each $C_{u, v}^w$ is a homogeneous polynomial of degree $\ell(w)-\ell(u)-\ell(v)$. Setting $\alpha_i = 0$ for all $i$ recovers ordinary cohomology; in particular, $C_{u, v}^w = c_{u, v}^w$ whenever $\ell(w) = \ell(u)+\ell(v)$ (i.e., whenever $c_{u, v}^w\neq 0$). The advantage in working with $H^\star_T(G_n/B)$ comes from \emph{GKM theory} \cite{GKM}: the set $(G_n/B)^T$ of $T$-fixed points of $G_n/B$ is indexed by $W_n$ (hence finite), and it turns out that the map
\begin{equation}\label{eqn:GKM}
    H^\star_T(G_n/B)\to H^\star_T((G_n/B)^T)\cong\bigoplus_{w\in W_n}\Z[\alpha_1,\dots, \alpha_n]
\end{equation}
on cohomology induced by the inclusion $(G_n/B)^T\hookrightarrow G_n/B$ is also an injection. Thus the product structure on $H^\star_T(G_n/B)$ is given by pointwise products of lists of polynomials.
\begin{definition}
    The image of $\xi_v$ under the embedding (\ref{eqn:GKM}) is denoted $(\xi_{v|w})_{w\in W_n}$. The polynomial $\xi_{v|w}\in\Z[\alpha_1,\dots, \alpha_n]$ is called the \emph{equivariant restriction} of $\xi_v$ at $w$.
\end{definition}
Andersen--Jantzen--Soergel gave a formula for $\xi_{v|w}$ involving the combinatorics of $W_n$ and its associated root system. Let $s_{\alpha_i}$ denote the simple reflection through the hyperplane orthogonal to $\alpha_i$. Let $I = s_{\beta_1}\dots s_{\beta_{\ell(w)}}$ be a reduced word for $w\in W_n$. For each subword $J\subseteq I$ and $i\in[\ell(w)]$, let
\[\gamma_{J, i} := \begin{cases}
    \beta_i s_{\beta_i} & \beta_i\in J,\\
    s_{\beta_i} & \beta_i\notin J.
\end{cases}\]

\begin{theorem}[Equivariant restriction, \cite{AJS}]\label{thm:eqres}
    For $v$, $w$, $I$, $J$, and $\gamma_{J, i}$ as above, the equivariant restriction of $\xi_v$ at $w$ is the polynomial
    \[\xi_{v|w} = \sum_{J\subseteq I}\prod_{i=1}^{\ell(w)}\gamma_{J, i}\cdot 1\in\Z[\alpha_1,\dots, \alpha_n],\]
    where the sum is over subwords $J\subseteq I$ that are reduced words for $v$.
\end{theorem}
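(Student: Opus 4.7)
The plan is to prove Theorem~\ref{thm:eqres} by induction on $\ell(w)$, matching a recursion that the right-hand side satisfies---obtained by splitting subwords $J$ according to whether they contain the final position---with the standard recursion satisfied by equivariant Schubert restrictions.

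In the base case $\ell(w) = 0$, we have $w = e$ and the unique subword of the empty word is empty, which is reduced only for $v = e$; the sum thus evaluates to $1$ if $v = e$ and $0$ otherwise. This matches $\xi_{e|e} = 1$ and $\xi_{v|e} = 0$ for $v \neq e$ (the latter by Bruhat triangularity of Schubert classes).

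For the inductive step, write $I = I' \cdot s_{\beta_{\ell(w)}}$ so that $I'$ is a reduced word for $w' := w s_{\beta_{\ell(w)}}$, and partition reduced subwords $J$ of $I$ for $v$ by whether $\beta_{\ell(w)} \in J$. Subwords avoiding the last position restrict to reduced subwords of $I'$ for $v$; those containing it require $v s_{\beta_{\ell(w)}} < v$ and correspond to reduced subwords of $I'$ for $v s_{\beta_{\ell(w)}}$. Peeling off $\gamma_{J,\ell(w)}$ in each case and propagating the produced factor $\beta_{\ell(w)}$ (in the second case) through the chain of reflections $s_{\beta_{\ell(w)-1}},\ldots,s_{\beta_1}$ yields the identity
\[ \mathrm{RHS}(v, I) \;=\; \mathrm{RHS}(v, I') \;+\; [v s_{\beta_{\ell(w)}} < v]\cdot \gamma^*\cdot \mathrm{RHS}(v s_{\beta_{\ell(w)}}, I'),\]
where $\gamma^* := s_{\beta_1}\cdots s_{\beta_{\ell(w)-1}}(\beta_{\ell(w)})$ is the positive root indexed by the last position of $I$. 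By the inductive hypothesis, this equals $\xi_{v|w'} + [v s_{\beta_{\ell(w)}} < v]\,\gamma^*\,\xi_{v s_{\beta_{\ell(w)}}|w'}$.

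The proof then concludes by matching this expression with $\xi_{v|w}$, via the standard recursion for equivariant Schubert restrictions arising from the $\mathbb{P}^1$-fibration $\mathfrak{X}_w \to \mathfrak{X}_{w'}$ (equivalently, from the right action of the BGG divided-difference operator $\partial_{\beta_{\ell(w)}}$ on Schubert classes in $H^\star_T(G_n/B)$). I expect the main obstacle to be the combinatorial bookkeeping in the inductive step: while the $\beta_{\ell(w)}\notin J$ case is immediate, the $\beta_{\ell(w)}\in J$ case requires carefully tracking how the factor $\beta_{\ell(w)}$ produced at the last position survives as the inversion root $\gamma^*$ after all of $s_{\beta_{\ell(w)-1}}, \ldots, s_{\beta_1}$ have acted on it. A pleasant byproduct is that the sum's independence from the choice of reduced word $I$---a property not manifest from the statement itself---is obtained a posteriori from matching with the intrinsic invariant $\xi_{v|w}$.
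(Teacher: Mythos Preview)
The paper does not prove Theorem~\ref{thm:eqres}; it is quoted verbatim from \cite{AJS} (and, in this form, is often attributed to Billey as well) and used as a black box in the remainder of the paper. So there is no proof in the paper to compare against.

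That said, your outline is a correct proof of the formula and is in fact the standard argument. Your recursion on the right-hand side is exactly right: peeling off the last letter of $I$ and using that each $\gamma_{J',i}$ is a twisted reflection---so that $\gamma_{J',1}\cdots\gamma_{J',\ell(w)-1}\cdot(\beta_{\ell(w)}\cdot 1) = \bigl(s_{\beta_1}\cdots s_{\beta_{\ell(w)-1}}(\beta_{\ell(w)})\bigr)\cdot\bigl(\gamma_{J',1}\cdots\gamma_{J',\ell(w)-1}\cdot 1\bigr)$---produces the factor $\gamma^{*}=w'(\beta_{\ell(w)})$ cleanly. The matching recursion $\xi_{v|w}=\xi_{v|w'}+[vs<v]\,\gamma^{*}\,\xi_{vs|w'}$ on the equivariant side is indeed a consequence of the $\mathbb{P}^1$-bundle $G/B\to G/P_{s}$ (equivalently, of the left/right divided-difference identities), though if you want the argument to be self-contained you should supply a reference or a short derivation for it rather than invoke it as ``standard''. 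With that caveat, the proposal is sound and would serve as an independent proof of the cited result.
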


\begin{example}
    Let $w = 3241\in\mathcal{S}_4$ and choose the reduced word $I = s_1s_2s_3s_1$ for $w$. Let $v = 2143$, which has reduced words $s_1s_3$ and $s_3s_1$. Using the type-$A$ formula
    \[s_{\alpha_i}\cdot \alpha_j = \begin{cases} -\alpha_i & i = j,\\
        \alpha_i+\alpha_j & |i-j| = 1,\\
        \alpha_j & |i-j|\geq 2,\end{cases}\]
    we compute $\xi_{v|w}$ using Theorem~\ref{thm:eqres}:
    \[\xi_{v|w} = \alpha_1 s_1 s_2 \alpha_3 s_3 s_1\cdot 1 + s_1 s_2 \alpha_3 s_3 \alpha_1 s_1\cdot 1 = \alpha_1 s_1s_2\alpha_3 + s_1s_2\alpha_3\alpha_1 = (\alpha_1+\alpha_2)(\alpha_1+\alpha_2+\alpha_3).\]
\end{example}

The restriction formula gives a slick, type-independent method for proving facts about Schubert structure constants without identifying explicit representatives for the cohomology classes. We record some well-known consequences we use in the proof of Theorem~\ref{thm:main}.

\begin{lemma}\cite[pg. 2]{RYY}\label{lemma:0restriction}
    The equivariant restriction $\xi_{v|w} = 0$ unless $v\leq w$ in Bruhat order. The equivariant structure constant $C_{u, v}^w = 0$ unless $u\leq w$ and $v\leq w$ in Bruhat order.
\end{lemma}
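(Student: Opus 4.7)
My plan is to derive both claims from Theorem~\ref{thm:eqres} together with the subword characterization of Bruhat order, which asserts that $v \leq w$ if and only if some (equivalently, every) reduced word for $w$ contains a reduced word for $v$ as a subword. This makes the first claim immediate: if $v \not\leq w$, fix any reduced word $I$ for $w$; then no subword $J \subseteq I$ is a reduced word for $v$, so the sum in Theorem~\ref{thm:eqres} is empty and $\xi_{v|w} = 0$.

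For the second claim, I would apply the GKM embedding (\ref{eqn:GKM}) to the product formula $\xi_u \smile \xi_v = \sum_{w'} C_{u,v}^{w'} \xi_{w'}$, yielding
\[\xi_{u|z}\,\xi_{v|z} = \sum_{w' \in W_n} C_{u,v}^{w'}\,\xi_{w'|z} \qquad \text{for every } z \in W_n.\]
Suppose $u \not\leq w$; I show $C_{u,v}^w = 0$ by induction on $\ell(w)$. Setting $z = w$, the left-hand side vanishes by the first claim, and the right-hand side is supported on those $w' \leq w$ for the same reason. Transitivity of Bruhat order forces $u \not\leq w'$ for every such $w' < w$, so the inductive hypothesis eliminates every proper term, leaving $C_{u,v}^w\,\xi_{w|w} = 0$. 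The analogous statement for $v$ then follows from the symmetry $C_{u,v}^w = C_{v,u}^w$.

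The remaining obstacle is verifying $\xi_{w|w} \neq 0$, which I would handle by a direct computation: the unique subword of the chosen reduced word $I = s_{\beta_1} \cdots s_{\beta_{\ell(w)}}$ that is itself a reduced word for $w$ is $I$, and expanding the single-term product $\prod_i \beta_i s_{\beta_i} \cdot 1$ from Theorem~\ref{thm:eqres} exhibits $\xi_{w|w}$ as a (nonempty) product of positive roots when $w \neq e$, and as $1$ when $w = e$. In either case $\xi_{w|w}$ is a nonzero element of $\Z[\alpha_1, \dots, \alpha_n]$, completing the argument. The essential input is thus the standard subword/Strong Exchange characterization of Bruhat order; once this is granted, everything else is a clean triangular-system argument against the partial order.
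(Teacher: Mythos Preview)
The paper does not give its own proof of this lemma; it simply cites \cite{RYY} and moves on. So there is nothing to compare against, and the question is just whether your argument stands on its own. It does.

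Your derivation of the first assertion from Theorem~\ref{thm:eqres} via the subword characterization of Bruhat order is exactly right, and the triangular induction you run for the second assertion is the standard way to pass from vanishing restrictions to vanishing structure constants. The only point that deserves a word of care is the nonvanishing of $\xi_{w|w}$. Your observation that $I$ is the unique subword of $I$ that is reduced for $w$ is correct (any proper subword is too short). Pushing the reflections to the right in the single surviving term of Theorem~\ref{thm:eqres} gives
\[
\xi_{w|w}=\prod_{i=1}^{\ell(w)} s_{\beta_1}\cdots s_{\beta_{i-1}}(\beta_i),
\]
and for a reduced expression these factors are precisely the distinct positive roots in the inversion set of $w$, hence the product is a nonzero element of $\Z[\alpha_1,\dots,\alpha_n]$. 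With that in hand, your induction closes cleanly.
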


\begin{prop}\cite[Theorem 2.1]{RYY}\label{prop:equiembed}
    If $\iota:\Delta\to \Delta'$ is an embedding of Dynkin diagrams, then $C_{u, v}^w$ and $C_{\iota(u), \iota(v)}^{\iota(w)}$ agree up to labelling of the variables. In particular, $c_{u, v}^w = c_{\iota(u), \iota(v)}^{\iota(w)}$.
\end{prop}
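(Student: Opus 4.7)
The plan is to leverage the equivariant restriction formula (Theorem \ref{thm:eqres}), which expresses each $\xi_{v|w}$ purely in terms of the combinatorics of reduced words in $W$ and the action of simple reflections on simple roots. Because $\iota$ realizes $\Delta$ as an \emph{induced} subgraph of $\Delta'$, the Cartan integers among $\iota(\Delta)$ inside $\Delta'$ match those of $\Delta$, so $s_{\iota(\alpha_i)}\cdot\iota(\alpha_j) = \iota(s_{\alpha_i}\cdot\alpha_j)$. Reduced words for $w\in W$ therefore correspond letter-for-letter to reduced words for $\iota(w)\in W'$, and applying Theorem \ref{thm:eqres} in the two root systems yields the key identity
\[\xi_{\iota(v)|\iota(w)} = \iota\bigl(\xi_{v|w}\bigr),\]
where $\iota$ also denotes the map on polynomials $\alpha_i\mapsto\iota(\alpha_i)$.

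Next I would upgrade this to structure constants by induction on Bruhat order in $W$. Consider the GKM expansion at the fixed point $\iota(w)\in(G_{n'}/B)^T$:
\[\xi_{\iota(u)|\iota(w)}\cdot\xi_{\iota(v)|\iota(w)} = \sum_{y\leq\iota(w)} C_{\iota(u),\iota(v)}^{y}\,\xi_{y|\iota(w)},\]
where Lemma \ref{lemma:0restriction} supplies the Bruhat restriction on $y$. A Matsumoto-style argument then shows any such $y$ lies in $\iota(W)$: every reduced word for $\iota(w)$ uses only generators indexed by $\iota(\Delta)$ (braid moves preserve the generator set), hence so does every subword that is itself a reduced expression. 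Writing $y = \iota(x)$ with $x\leq w$ and substituting the key identity gives
\[\iota\bigl(\xi_{u|w}\bigr)\cdot\iota\bigl(\xi_{v|w}\bigr) = \sum_{x\leq w} C_{\iota(u),\iota(v)}^{\iota(x)}\,\iota\bigl(\xi_{x|w}\bigr).\]
Applying $\iota$ termwise to the analogous GKM expansion of $\xi_u\smile\xi_v$ at $w$ inside $G_n/B$ produces the same left-hand side but with $\iota(C_{u,v}^x)$ in place of $C_{\iota(u),\iota(v)}^{\iota(x)}$. Subtracting and invoking the inductive hypothesis $C_{\iota(u),\iota(v)}^{\iota(x)} = \iota(C_{u,v}^x)$ for $x<w$ leaves
\[\bigl[C_{\iota(u),\iota(v)}^{\iota(w)} - \iota(C_{u,v}^{w})\bigr]\,\iota\bigl(\xi_{w|w}\bigr) = 0.\]
Since $\xi_{w|w}$ is the well-known nonzero polynomial $\prod_{\alpha>0,\,w^{-1}\alpha<0}\alpha$ (computable directly from Theorem \ref{thm:eqres} with $v=w$), the bracketed term vanishes and the induction closes. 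Specializing $\alpha_i\mapsto 0$ then recovers $c_{u,v}^w = c_{\iota(u),\iota(v)}^{\iota(w)}$.

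The delicate step will be the reduction from $y\leq\iota(w)$ in $W'$ to $y\in\iota(W)$; this depends crucially on $\iota(\Delta)$ being an induced subdiagram, since otherwise additional Coxeter relations in $W'$ could produce reduced expressions for $\iota(w)$ involving generators outside $\iota(\Delta)$. Once this containment is in hand, the remaining triangular inversion is routine, as the restriction matrix $(\xi_{x|w})_{x\leq w}$ is upper triangular with nonzero diagonal.
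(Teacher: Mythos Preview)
The paper does not prove this proposition; it is quoted from \cite[Theorem~2.1]{RYY} and no argument is given in the text. Your sketch is correct and is essentially the standard proof one would expect: the AJS formula of Theorem~\ref{thm:eqres} depends only on reduced words and on the Cartan integers among the simple roots that appear, both of which are preserved by an induced-subdiagram embedding, yielding $\xi_{\iota(v)\mid\iota(w)}=\iota(\xi_{v\mid w})$; then the triangular system coming from Lemma~\ref{lemma:0restriction} together with $\xi_{w\mid w}\neq 0$ forces $C_{\iota(u),\iota(v)}^{\iota(w)}=\iota(C_{u,v}^{w})$ by downward induction on Bruhat order. The one step you might tighten is the containment $\{y\in W':y\leq\iota(w)\}\subseteq\iota(W)$: rather than arguing via Matsumoto, it is cleaner to invoke the standard Coxeter-group fact that a parabolic subgroup $W_J\leq W'$ is itself a Coxeter system with the restricted length function, so the subword characterization of Bruhat order already confines any $y\leq\iota(w)$ to $W_J=\iota(W)$.
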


\begin{prop}\label{prop:equiprod}
    If $w = w'\times w''$ factors over a parabolic subgroup $W'\times W''\leq W$, then $C_{u, v}^w = 0$ unless $u = u'\times u''$ and $v = v'\times v''$ also factor. In this case,
    \[C_{u, v}^w = C_{u', v'}^{w'}C_{u'', v''}^{w''}.\]
\end{prop}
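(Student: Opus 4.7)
The plan is to break this into three pieces: a Bruhat-order compatibility fact, a factorization of equivariant restrictions, and a triangular-systems argument using the GKM embedding.

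First, I would establish that if $w = w'\times w''$ factors and $u \leq w$, then $u$ factors as $u = u'\times u''$ with $u' \leq w'$ and $u'' \leq w''$. Since the simple reflections of $W'$ commute with those of $W''$, any reduced expression for $w$ is a shuffle of reduced expressions for $w'$ and $w''$, and a reduced subword of such a shuffle is itself a shuffle of reduced subwords (a length count on each side forces reducedness). Combined with Lemma~\ref{lemma:0restriction}, this immediately gives $C_{u, v}^w = 0$ whenever $u$ or $v$ fails to factor.

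Next, and this is the main new input and the step I expect to be most delicate, I would prove the factorization $\xi_{v|w} = \xi_{v'|w'}\cdot \xi_{v''|w''}$ for factoring $v$ and $w$. Taking a concatenated reduced word $I = I'I''$ for $w$, reduced subwords $J\subseteq I$ for $v$ correspond bijectively to pairs $(J', J'')$ of reduced subwords of $I'$ and $I''$ for $v'$ and $v''$. Applying Theorem~\ref{thm:eqres}, I would show that the product $\prod_i \gamma_{J, i}\cdot 1$ splits into a product of the analogous expressions over $I'$ and $I''$. The key is that operators indexed by $I'$-positions commute with those indexed by $I''$-positions as operators on $\Z[\alpha_1, \dots, \alpha_n]$, including the mixed case: multiplication by a root $\beta$ commutes with a reflection $s_{\beta'}$ precisely when $\beta \perp \beta'$, which holds here because simple roots from different components of the Dynkin subdiagram generating $W'\times W''$ are orthogonal. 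Summing over $J$ then factors as a product of sums. The bookkeeping and the mixed commutativity check are where the real work lies.

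Finally, I would deduce the multiplicative formula from the GKM injection (\ref{eqn:GKM}). Restricting both sides of $\xi_u\xi_v = \sum_w C_{u,v}^w \xi_w$ at an arbitrary factoring element $z = z'\times z''$ and invoking the restriction factorization, one side becomes
\[\Big(\sum_{y'\in W'} C_{u', v'}^{y'}\xi_{y'|z'}\Big)\Big(\sum_{y''\in W''} C_{u'', v''}^{y''}\xi_{y''|z''}\Big),\]
while the other is $\sum_{y = y'\times y''} C_{u, v}^y\,\xi_{y'|z'}\xi_{y''|z''}$ (only factoring $y$ contribute, by the Bruhat fact above). A triangular induction on $\ell(z)$ — using $\xi_{z|z}\neq 0$ and the vanishing of $\xi_{y|z}$ for $y \not\leq z$ — then forces $C_{u, v}^w = C_{u', v'}^{w'}C_{u'', v''}^{w''}$.
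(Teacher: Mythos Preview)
Your proposal is correct and follows essentially the same three-step approach as the paper: Bruhat-order factorization for the vanishing, the restriction factorization $\xi_{v|w} = \xi_{v'|w'}\xi_{v''|w''}$ via the AJS formula and orthogonality, and then comparison of coefficients through the GKM embedding. The only difference is cosmetic: the paper restricts at the single longest element $x\in W'\times W''$ and invokes ``uniqueness of equivariant Schubert structure constants'' in one line, whereas your explicit triangular induction over all factoring $z$ spells out that uniqueness argument more carefully.
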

\begin{proof}
    The first sentence is immediate from Lemma~\ref{lemma:0restriction} and the fact that Bruhat order respects factorizations. Now, if $I$ is a reduced word for $w$, then using the commutation relations for Weyl groups we may choose $I$ to be of the form $I'I''$, where $I'$ is a reduced word for $w'$ and $I''$ is a reduced word for $w''$. We claim first that for $v = v'\times v''\leq w$, we have
    \[\xi_{v|w} = \xi_{v'|w'}\cdot \xi_{v''|w''}.\]
    Note that by our choice of $I = I'I''$, each $J$ appearing in the formula for $\xi_{v|w}$ must be of the form $J'J''$, where $J'$ is a reduced word for $v'$ and $J''$ is a reduced word for $v''$. Moreover, the simple reflections $s_{\beta_{i'}}$ associated to $\beta_{i'}\in I'$ fix all the roots $\beta_{i''}\in J''$, since these roots are orthogonal in the root system for $W'\times W''$. Our claim follows:
    \[\xi_{v|w} = \sum_{J\subseteq I'I''}\prod_{i=1}^{\ell(w)}\gamma_{J, i}\cdot 1 = \left(\sum_{J'\subseteq I'}\prod_{i'=1}^{\ell(w')}\gamma_{J', i'}\cdot 1\right)\left(\sum_{J''\subseteq I''}\prod_{i''=1}^{\ell(w'')}\gamma_{J'', i''}\cdot 1\right) = \xi_{v'|w'}\xi_{v''|w''}.\]
    Now, let $x$ denote the longest element in $W'\times W''$. Then by definition
    \[\xi_{u|x}\cdot \xi_{v|x} = \sum_{w\in W}C_{u, v}^w\xi_{w|x}.\]
    By Lemma~\ref{lemma:0restriction}, the only nonzero terms in the sum come from $w$ such that $u, v\leq w\leq x$, and all such $w$ factor as $w'\times w''\in W'\times W''$. But we also have 
    \[\xi_{u|x}\xi_{v|x} = \xi_{u'|x'}\xi_{u''|x''}\xi_{v'|x'}\xi_{v''|x''} = \sum_{w'\in W', w''\in W''}C_{u', v'}^{w'}\xi_{w'|x'}C_{u'', v''}^{w''}\xi_{w''|x''} = \sum_{w = w'\times w''}C_{u', v'}^{w'}C_{u'', v''}^{w''}\xi_{w|x}.\]
    The proposition follows by the uniqueness of equivariant Schubert structure constants.
\end{proof}

\section{Proof of Theorem~\ref{thm:main}}
Throughout this section, we will fix $\mathcal{W} := \{W_n\}_{n\geq 1}$ to be the sequence of Weyl groups associated to one of the classical familes of complex Lie groups. The associated Dynkin diagrams will be denoted $\Delta_n$.

\begin{definition}
    Let $I$ be a reduced word for $w\in W_n$. The \emph{Dynkin support} of $w$ is the Dynkin diagram $\Delta(w)$ isomorphic to the subgraph of $W_n$ induced by the vertices corresponding to those $s_{\alpha_j}$ appearing in $I$. 
\end{definition}

\begin{definition}
    We say two elements $w\in W_m$ and $w'\in W_{n}$ are \emph{equivalent} and write $w\sim w'$ if there is an isomorphism of Dynkin supports $\Delta(w)\cong\Delta(w')$ sending $w$ to $w'$.
\end{definition}

Since all reduced words for $w$ involve the same set of simple reflections, $\Delta(w)$ depends only on $w$ and not on the choice of $I$. Coxeter length and Dynkin support are invariant under $\sim$, so we may refer to $\ell([w])$ or $\Delta([w])$ for an equivalence class $[w]$. Since $\Delta(w)$ is (isomorphic to) an induced subgraph of $\Delta_n$, we see that $\Delta(w)$ has at most one component of type $B$, $C$, or $D$ (when $\Delta_n$ is also of this type) and all other components are of type $A$. Writing $w = \prod_{i=1}^p w_i$ as an element of the Weyl group associated to $\Delta(w)$, for $1\leq j\leq n$ let $a_w(j)$ denote the number of factors $w_i$ in components of type $A_j$ that are equal.

\begin{lemma}\label{lemma:finite}
    If $w\in W_n$ has Coxeter length $k$, then there exists $w'\in W_{2k}$ such that $w\sim w'$.
\end{lemma}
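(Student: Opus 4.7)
The plan is to reduce the lemma to a purely combinatorial question about embedding Dynkin diagrams as induced subgraphs. First I would observe that every reduced word for $w$ has length $k$, and $\Delta(w)$ is (isomorphic to) the induced subdiagram of $\Delta_n$ on the nodes whose simple reflections appear in that word. Hence $\Delta(w)$ has $m \leq k$ vertices, and consequently has at most $m$ connected components. By the definition of $\sim$, it therefore suffices to realize $\Delta(w)$ as an induced subgraph of $\Delta_{2k}$ (taken in the classical type matching $\Delta_n$): the resulting inclusion of Weyl groups will send $w$ to a $w' \in W_{2k}$ with $w \sim w'$ by construction.

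Second, I would exhibit such an induced-subgraph embedding explicitly. Decompose $\Delta(w) = \Delta^{(1)} \sqcup \cdots \sqcup \Delta^{(p)}$ into connected components; each is of type $A$ except possibly one of type $B$, $C$, or $D$ matching that of $\Delta_n$. The strategy is to arrange the components in a row along $\Delta_{2k}$, separated by single empty ``gap'' nodes. If there is a non-type-$A$ component, place it at the corresponding ``special'' end of $\Delta_{2k}$ -- the double-edge end in types $B$ and $C$, the branching end in type $D$ -- and then lay the remaining type-$A$ components along the simple path that makes up the rest of $\Delta_{2k}$, in any order. The total node count consumed by this arrangement is $m + (p-1)$, and since $p \leq m \leq k$ this is bounded by $2k - 1$, so the layout fits inside the $2k$ nodes of $\Delta_{2k}$.

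The remaining task is to verify that this layout is genuinely an induced subgraph with the claimed structure. The gap nodes immediately rule out unwanted edges between distinct components, and a type-$A$ component placed along a straight segment of a path is automatically induced. The most delicate case is type $D$: placing a $D_j$ component at the branching end must use both short arms together with the first $j-2$ nodes of the long arm to yield exactly $D_j$ as an induced subgraph, and the first type-$A$ component on the long arm must begin after a gap to avoid inadvertently producing a larger $D$-subdiagram or a fused type-$A$ component through the branching node. I expect this type-$D$ bookkeeping to be the main technical point, but it is routine given the slack provided by the bound $m + p \leq 2k$, and once it is handled the induced Weyl group inclusion from the embedding $\Delta(w) \hookrightarrow \Delta_{2k}$ produces the desired $w' \in W_{2k}$.
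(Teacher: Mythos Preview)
Your proposal is correct and follows the same approach as the paper: both observe that $\Delta(w)$ has at most $k$ vertices and then embed it as an induced subgraph of $\Delta_{2k}$. The paper simply asserts this embedding exists in one sentence, whereas you supply the explicit ``components separated by gap nodes'' construction and the bound $m+(p-1)\leq 2k-1$ that justifies it; your extra care in type $D$ is not needed for the paper's level of detail but is a sound way to make the assertion rigorous.
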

\begin{proof}
    Since $w$ has length $k$, a reduced word $I$ for $w$ uses at most $k$ distinct simple reflections. Thus $\Delta(w)$ has at most $k$ vertices, and any such diagram embeds as an induced subgraph of any classical Dynkin diagram with $2k$ vertices.
\end{proof}

\begin{lemma}\label{lemma:enum}
    Let $\Delta$ be a Dynkin diagram with $k'$ vertices and $c$ components, all of type $A$. Then for all $k\geq k'-1$, the number of embeddings $\Delta\hookrightarrow A_k$ is
    \[\frac{(k+1-k')!}{(k+1-k'-c)!} = \prod_{i=0}^{c-1}(k+1-k'-i),\]
    a polynomial in $k$ with lead term $k^c$.
\end{lemma}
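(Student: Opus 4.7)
My plan is to prove the lemma by a direct stars-and-bars enumeration. First, I would decompose $\Delta = A_{a_1} \sqcup \cdots \sqcup A_{a_c}$ into its labeled type-$A$ components, so that $a_1 + \cdots + a_c = k'$. An embedding of $\Delta$ into $A_k$ is equivalent to choosing, for each labeled component, an induced subpath of $A_k$ of size $a_j$, subject to the constraint that the chosen subpaths are pairwise disjoint and that no vertex of one is adjacent in $A_k$ to a vertex of another (otherwise their union would fail to be an induced subgraph isomorphic to $\Delta$).

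Next, I would stratify these embeddings by the left-to-right order $\pi$ in which the $c$ labeled blocks appear inside $A_k$; there are $c!$ such orderings, and each embedding determines $\pi$ uniquely from the leftmost positions of its blocks. For each fixed $\pi$, a valid tuple is encoded by the sizes $g_0, g_1, \ldots, g_c$ of the $c+1$ gap regions (before the first block, between consecutive blocks, and after the last block), subject to $g_0, g_c \geq 0$, $g_j \geq 1$ for $1 \leq j \leq c-1$, and $g_0 + g_1 + \cdots + g_c = k - k'$. After shifting each inner gap by $-1$, a standard stars-and-bars count yields $\binom{k - k' + 1}{c}$ valid placements for each $\pi$, giving a total of
\[c!\binom{k - k' + 1}{c} = \frac{(k - k' + 1)!}{(k - k' - c + 1)!} = \prod_{i=0}^{c-1}(k + 1 - k' - i),\]
a polynomial in $k$ of degree $c$ with leading coefficient $1$, so its lead term is $k^c$.

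The only point requiring care is verifying the formula on the full asserted range $k \geq k' - 1$. The stars-and-bars derivation above only produces a positive count when $k \geq k' + c - 1$, so that the $c$ blocks and $c - 1$ forced gaps fit inside $A_k$. In the range $k' - 1 \leq k \leq k' + c - 2$, the actual number of embeddings is $0$, and I would confirm that the polynomial $\prod_{i=0}^{c-1}(k + 1 - k' - i)$ also vanishes there: the values $k + 1 - k' \in \{0, 1, \ldots, c - 1\}$ force exactly one factor to be zero. Hence the polynomial matches the true count throughout $k \geq k' - 1$. I do not expect any serious obstacle beyond this boundary check; the argument is essentially bookkeeping.
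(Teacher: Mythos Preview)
Your proof is correct and follows essentially the same approach as the paper: both treat an embedding as a choice of positions for the $c$ labelled components along the path $A_k$ with mandatory gaps between consecutive components, and both reduce the count to an elementary enumeration yielding $c!\binom{k+1-k'}{c}$. Your explicit stars-and-bars bookkeeping and your verification that the polynomial vanishes on the range $k'-1\le k\le k'+c-2$ (where no embedding exists) make the argument slightly more detailed than the paper's, which simply invokes the standard bijection to unrestricted orderings of $c$ labelled and $k+1-k'-c$ unlabelled objects.
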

\begin{proof}
    An embedding $\iota:\Delta\hookrightarrow A_k$ is equivalent to an ordering of the $c$ (labelled) factors of $\Delta$ and the $(k-k')$ (unlabelled) vertices of $A_k\setminus\iota(\Delta)$ in which no two components of $\Delta$ appear consecutively (see Figure~\ref{fig:embedding}). Such orderings are equinumerous with orderings of $c$ labelled objects and $(k+1-k'-c)$ unlabelled objects (with no adjacency restrictions), which are enumerated by the formula in the lemma statement.
\end{proof}

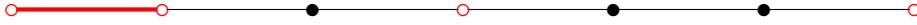
\begin{figure}[h]
    \begin{center}
        \begin{tikzpicture}
            \draw[red, ultra thick] (0.06,0.02) -- (1.94,0.02);
            \draw[black, thin] (2.06,0.02) -- (3.94,0.02);
            \draw[black, thin] (4.06,0.02) -- (5.94,0.02);
            \draw[black, thin] (6.06,0.02) -- (7.94,0.02);
            \draw[black, thin] (8.06,0.02) -- (9.94,0.02);
            \draw[black, thin] (10.06,0.02) -- (11.94,0.02);
        
            \draw[red] (0,0) node{$\circ$};
            \draw[red] (2,0) node{$\circ$};
            \draw (4,0) node{$\bullet$};
            \draw[red] (6,0) node{$\circ$};
            \draw (8,0) node{$\bullet$};
            \draw (10,0) node{$\bullet$};
            \draw[red] (12, 0) node{$\circ$};
        \end{tikzpicture}
    \end{center}
    \caption{An embedding of $\Delta = A_1\times A_1 \times A_2$ into $A_7$.}\label{fig:embedding}
\end{figure}

\begin{prop}\label{prop:poly}
    For $w\in W_m$ for some $m$, let $N_w(n)$ denote the number of elements $v\in W_n$ such that $w\sim v$. Let $n'$ denote the number of vertices in $\Delta(w)$, and $p$ the number of type-$A$ components in $\Delta(w)$. Then for
    \[n\geq \begin{cases}
        n'-1 & \text{in type $A$},\\
        n' & \text{in types $B$ and $C$},\\
        n'+1 & \text{in type $D$},
    \end{cases}\]
    $N_w(n)$ is a polynomial with lead term 
    \[LT(N_w(n)) = \frac{n^p}{\prod_{j=1}^m (a_w(j)!)}.\] 
\end{prop}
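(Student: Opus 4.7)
The plan is to count $N_w(n)$ via an orbit argument: each $v\in W_n$ with $v\sim w$ corresponds to the data of an induced subdiagram $\Delta(v)\subseteq\Delta_n$ together with an isomorphism $\phi:\Delta(w)\to\Delta(v)$ with $\phi(w) = v$, and two such isomorphisms produce the same $v$ precisely when they differ by an element of the stabilizer $\mathrm{Stab}_w$ of $w$ under the automorphisms of $\Delta(w)$ (acting via the induced Weyl-group maps). This yields
\[
N_w(n) = \frac{\#\{\text{embeddings } \iota:\Delta(w)\hookrightarrow \Delta_n\}}{|\mathrm{Stab}_w|},
\]
reducing the proposition to counting induced-subgraph embeddings and computing $|\mathrm{Stab}_w|$.

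First I would decompose $\Delta(w) = \Delta_0 \sqcup A_{j_1}\sqcup\cdots\sqcup A_{j_p}$, where $\Delta_0$ is empty or the unique non-type-$A$ component of $\Delta(w)$ (at most one, since each $\Delta_n$ has at most one non-type-$A$ component). To enumerate embeddings into $\Delta_n$, I would first place $\Delta_0$: in types $B$, $C$, $D$, if $\Delta_0$ is nonempty it is essentially forced onto the non-type-$A$ tail of $\Delta_n$, giving only an $O(1)$ number of placements. With $\Delta_0$ fixed, the remaining $p$ type-$A$ components embed into the residual type-$A$ subdiagram of $\Delta_n$, which is an $A_{n-q}$ for some constant $q$ depending on the classical type and on $|\Delta_0|$. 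Lemma~\ref{lemma:enum} then counts such embeddings as a polynomial of degree $p$ in $n$ with leading term $n^p$. When $\Delta(w)$ is purely type-$A$ but $\Delta_n$ is not, the same approach applies after viewing $\Delta_n$ as a chain with a marked end; embeddings that would interact with the non-simply-laced end contribute only a lower-order correction.

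To extract the leading coefficient, I would identify $\mathrm{Stab}_w$ with the subgroup of $\mathrm{Aut}(\Delta(w))$ that permutes type-$A$ components whose associated factors $w_i$ are equal: under the definition of $a_w(j)$ as recording the multiplicities of equal $w_i$'s within the type-$A_j$ components, this gives $|\mathrm{Stab}_w| = \prod_j a_w(j)!$. Dividing the embedding count by this factor yields $LT(N_w(n)) = n^p/\prod_j a_w(j)!$. The type-dependent bounds on $n$ then come from the minimum $n$ at which Lemma~\ref{lemma:enum} applies to the residual $A_{n-q}$: $q=1$ in type $A$, $q=2$ in types $B$ and $C$, and $q=3$ in type $D$, which translate exactly into the stated bounds $n\geq n'-1$, $n\geq n'$, and $n\geq n'+1$.

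The main obstacle will be the careful identification of $\mathrm{Stab}_w$ with the component-permutation subgroup. A priori, internal symmetries such as the $\Z/2$ chain-reversal of each $A_j$ or the outer automorphisms of a $D_q$ component could enlarge $\mathrm{Stab}_w$, so one must verify under the paper's convention for $\sim$ that these internal automorphisms do not contribute. A secondary technical point is checking, via case analysis for types $B$, $C$, $D$, that the $O(1)$ boundary corrections at the non-simply-laced end are indeed subleading in $n$ and do not perturb the $n^p$ coefficient.
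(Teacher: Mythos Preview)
Your plan is essentially the paper's own argument. Both reduce $N_w(n)$ to a count of embeddings of $\Delta(w)$ into $\Delta_n$, divided by a symmetry factor, and then compute that embedding count type by type using Lemma~\ref{lemma:enum}. The paper simply \emph{asserts} the reduction $N_w(n)=N'_w(n)/\prod_j a_w(j)!$ in one line and spends the rest of the proof on the case analysis; your orbit--stabilizer formulation is a more explicit justification of that same step. So at the level of strategy there is no real difference.

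Two points are worth flagging. First, the obstacle you single out---internal automorphisms such as the reversal of an $A_j$ or the triality of a $D_4$ component inflating $\mathrm{Stab}_w$ beyond the component-permutation subgroup---is genuine, and the paper does not address it either: its one-line reduction already bakes in the identification you are worried about. For instance, for $w=s_1s_2s_1$ one has $\mathrm{Stab}_w=\mathrm{Aut}(A_2)\cong\Z/2$, whereas $\prod_j a_w(j)!=1$. This does not affect the application to Theorem~\ref{thm:main}, where only $\Delta(w)=A_1^k$ matters and every component has trivial automorphism group, but it means your stated identification $|\mathrm{Stab}_w|=\prod_j a_w(j)!$ cannot be verified in general; the paper's proof has the same silent gap.

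Second, your single-parameter ``residual $A_{n-q}$'' shortcut (with $q=1,2,3$) is too coarse to recover the exact thresholds and is not how the paper proceeds. In type $A$ one embeds into $A_n$ itself ($q=0$, giving $n\ge n'-1$ directly from Lemma~\ref{lemma:enum}); in types $B$ and $C$ the paper separately treats the case where an $A_1$ component lands on the short/long root and checks these ``exceptional'' embeddings contribute in degree $\le p-1$; in type $D$ the paper does an inclusion--exclusion over the two overlapping copies of $A_{n-1}$ inside $D_n$ and then handles exceptional embeddings that use the fork (an $A_3$ or two $A_1$'s at the branch node). Your outline should be expanded along these lines rather than reduced to a single $q$.
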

\begin{proof}
    Let $N'_w(n)$ denote the number of induced subgraphs of $\Delta_n$ isomorphic to $\Delta(w)$. Then
    \[N_w(n) = \frac{N'_w(n)}{\prod_{j=1}^m (a_w(j)!)},\]
    so it suffices to show that $N'_w(n)$ is a polynomial with lead term $n^p$ for sufficiently large $n$. In type $A$, this follows immediately by Lemma~\ref{lemma:enum}.
    
    In types $B$, $C$, and $D$, the analysis is similar. If $\Delta(w)$ contains a component that is not of type $A$ (say with $r$ vertices), then this component is unique and has a unique embedding into each $W_n$. The remaining components of $\Delta(w)$ are then embedded into a copy of $A_{n-r-1}$. Applying Lemma~\ref{lemma:enum} with $k = n-r-1$, $k' = n'-r$, and $c = p$ shows that these embeddings are enumerated by a polynomial with lead term $n^p$ for $n\geq n'$.
    
    The enumeration of embeddings is slightly more complicated when all components of $\Delta(w)$ are of type $A$. Consider first the Dynkin diagrams of type $B_n$ or $C_n$, which each contain one copy of $A_{n-1}$. By Lemma~\ref{lemma:enum} (with $k = n-1$, $k' = n'$, $c = p$), embeddings of $\Delta(w)$ into this embedded type-$A$ subdiagram are enumerated by a polynomial with lead term $n^p$ for $n\geq n'$. There are no further embeddings $\Delta(w)\hookrightarrow \Delta_n$ unless $\Delta(w)$ contains a component isomorphic to $A_1$. In this case, additional embeddings are obtained by mapping a type-$A_1$ component of $\Delta(w)$ to the final vertex of $\Delta_n$, then embedding the rest of $\Delta(w)$ into a copy of $A_{n-2}\hookrightarrow\Delta_n$. By Lemma~\ref{lemma:enum} (with $k = n-2$, $k' = n'-1$, $c = p-1$), such ``exceptional'' embeddings are enumerated by a polynomial of degree $p-1$ for $n\geq n'$. Thus in types $B$ and $C$, $N'_w(n)$ is a polynomial with lead term $n^p$ for all $n\geq n'$ as desired.

    Consider now the case where $\Delta_n = D_n$. The Dynkin diagram $D_n$ contains two embedded copies of $A_{n-1}$, which intersect in a copy of $A_{n-2}$. By inclusion-exclusion and three applications of Lemma~\ref{lemma:enum} (twice with $k = n-1$, $k' = n'$, $c = p$ and once with $k = n-2$, $k' = n'$, $c = p$), for $n\geq n'+1$ the number of embeddings of $\Delta(w)$ into one of these type-$A$ subdiagrams is enumerated by
    \[2\cdot\frac{(n-n')!}{(n-n'-p)!}-\frac{(n-1-n')}{(n-1-n'-p)},\]
    which is a polynomial with lead term $n^p$. Moreover, all embeddings $\Delta(w)\hookrightarrow D_n$ are embeddings into these type-$A$ subdiagrams unless $\Delta(w)$ contains at least two $A_1$ components or at least one $A_3$ component (see Figure~\ref{fig:exceptional} for an example). As in the previous case, Lemma~\ref{lemma:enum} (with $(k, k', c) = (n-3, n'-2, p-2)$ and $(n-4, n'-3, p-1)$ respectively) shows that for $n\geq n'$ these ``exceptional'' embeddings are enumerated by polynomials of degree strictly less than $p$. Thus the lead term of $N'_w(n)$ is $n^p$ in all cases, completing the proof.
\end{proof}

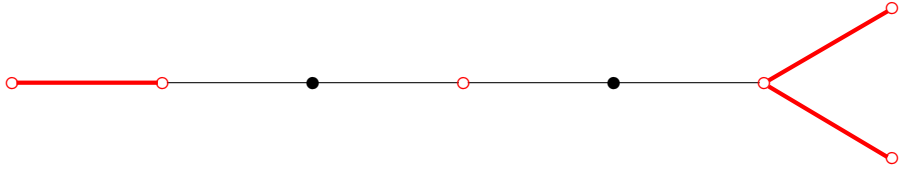
\begin{figure}[h]
    \begin{center}
        \begin{tikzpicture}
            \draw[red, ultra thick] (0.06,0.02) -- (1.94,0.02);
            \draw[black, thin] (2.06,0.02) -- (3.94,0.02);
            \draw[black, thin] (4.06,0.02) -- (5.94,0.02);
            \draw[black, thin] (6.06,0.02) -- (7.94,0.02);
            \draw[black, thin] (8.06,0.02) -- (9.94,0.02);
            \draw[red, ultra thick] (10.06,0.05) -- (11.64,0.97);
            \draw[red, ultra thick] (10.06,-0.03) -- (11.64,-0.97);
        
            \draw[red] (0,0) node{$\circ$};
            \draw[red] (2,0) node{$\circ$};
            \draw (4,0) node{$\bullet$};
            \draw[red] (6,0) node{$\circ$};
            \draw (8,0) node{$\bullet$};
            \draw[red] (10,0) node{$\circ$};
            \draw[red] (11.7,1) node{$\circ$};
            \draw[red] (11.7,-1) node{$\circ$};
        \end{tikzpicture}
    \end{center}
    \caption{An ``exceptional'' embedding of $\Delta = A_1\times A_2\times A_3$ into $D_8$.}\label{fig:exceptional}
\end{figure}

\begin{theorem}\label{thm:fullform}
    Fix $k$ and a choice of classical Lie type. Then $\gamma_k(n)$ is the finite sum
    \[\gamma_k(n) = \sum_{\substack{[w]\\ w\in W_{2k}, \ell(w) = k}}\left(\sum_{u, v\leq w}c_{u, v}^w\right)N_w(n).\]
    In particular, $\gamma_k(n)$ is a polynomial for
    \[n\geq \begin{cases} k-1 & \text{in type $A$},\\
        k & \text{in types $B$ and $C$},\\
        k+1 & \text{in type $D$}.\end{cases}\]
\end{theorem}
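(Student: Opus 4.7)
The plan is to partition the sum defining $\gamma_k(n)$ according to the equivalence relation $\sim$ introduced above, so that terms whose top index $w$ is equivalent collect into a single contribution. For each $w\in W_n$ with $\ell(w) = k$, set $S(w) := \sum_{u,v\in W_n}c_{u,v}^w$. The heart of the argument is to establish that $S(w)$ depends only on the equivalence class $[w]$; once that is done, the rest is essentially bookkeeping.

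First, by Lemma~\ref{lemma:0restriction}, every nonzero summand of $S(w)$ has $u,v\leq w$, and Bruhat order forces such $u,v$ to lie in the parabolic subgroup generated by the simple reflections appearing in a reduced word for $w$, i.e., in the Weyl group of $\Delta(w)$. Hence $S(w) = \sum_{u,v\leq w}c_{u,v}^w$, with the sum naturally identified with a sum over the Weyl group of $\Delta(w)$. Now if $w\sim w'$ via an isomorphism $\Delta(w)\cong\Delta(w')$ sending $w\mapsto w'$, two applications of Proposition~\ref{prop:equiembed} (to the inclusions $\Delta(w),\Delta(w')\hookrightarrow\Delta_n$ and to the induced identification between them) yield a bijection between the summands of $S(w)$ and $S(w')$ preserving individual Schubert constants. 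Therefore $S(w)=S(w')$, and $S$ descends to a function on equivalence classes.

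By Lemma~\ref{lemma:finite}, every equivalence class with Coxeter length $k$ has a representative in $W_{2k}$, so the classes can be indexed by such representatives. Regrouping the original sum for $\gamma_k(n)$ by equivalence class then gives
\[\gamma_k(n) = \sum_{\substack{[w]\\ w\in W_{2k},\,\ell(w)=k}}\left(\sum_{u,v\leq w}c_{u,v}^w\right)N_w(n),\]
which is the claimed formula. Polynomiality on the ranges stated in the theorem is immediate from Proposition~\ref{prop:poly}, which guarantees that each $N_w(n)$ is a polynomial in $n$ provided $n$ meets the bound prescribed in each type; since $w\in W_{2k}$ has $\ell(w)=k$, the number $n'$ of vertices of $\Delta(w)$ satisfies $n'\leq k$, and the type-wise bounds $n'-1$, $n'$, $n'+1$ specialize uniformly to $k-1$, $k$, $k+1$.

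The step I expect to be the main obstacle is the verification that $S(w)$ is genuinely a class function for $\sim$. The delicate point is that the truncation $S(w) = \sum_{u,v\leq w}c_{u,v}^w$ lives inside a fixed parabolic determined by $\Delta(w)$, so that Proposition~\ref{prop:equiembed} applies cleanly; one must check both that no ambient elements of $W_n$ outside this parabolic contribute and that the isomorphism of Dynkin supports really does carry the relevant $u,v\leq w$ bijectively to the corresponding pairs $u',v'\leq w'$. Once this reduction is made precise the remaining regrouping and the appeal to Proposition~\ref{prop:poly} are routine.
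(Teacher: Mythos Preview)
Your proposal is correct and follows essentially the same route as the paper's own proof: restrict to $u,v\leq w$ via Lemma~\ref{lemma:0restriction}, use Proposition~\ref{prop:equiembed} to show the inner sum depends only on $[w]$, invoke Lemma~\ref{lemma:finite} to choose representatives in $W_{2k}$, and conclude polynomiality from Proposition~\ref{prop:poly}. Your write-up is in fact slightly more explicit than the paper's in two respects---you spell out that $u,v\leq w$ lie in the parabolic of $\Delta(w)$, and you make the deduction $n'\leq k$ (needed to pass from the $n'$-dependent bounds of Proposition~\ref{prop:poly} to the uniform bounds in the theorem) explicit, whereas the paper leaves both implicit.
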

\begin{proof}
    Recall that by definition,
    \[\gamma_k(n) = \sum_{\substack{u, v, w\in W_n\\ \ell(w) = k}}c_{u, v}^w.\]
    By Lemma~\ref{lemma:0restriction}, we may restrict the sum to $u, v\leq w$. By Proposition~\ref{prop:equiembed}, if $w\sim w'$ in $W_n$ then $\sum_{u, v\leq w}c_{u, v}^w = \sum_{u', v'\leq w'}c_{u', v'}^{w'}$. Thus to compute $\gamma_k(n)$ we may group together structure constants indexed by elements in the same equivalence class $[w]$, and the size of each set $[w]\cap W_n$ is precisely $N_w(n)$ by definition. Finally, Lemma~\ref{lemma:finite} shows that each equivalence class $[v]$ for $v\in W_n$ with $\ell(v) = k$ is represented by an element $w\in W_{2k}$. The polynomality of $\gamma_k(n)$ now follows from the polynomality of $N_w(n)$ established in Proposition~\ref{prop:poly}.
\end{proof}

Theorem~\ref{thm:fullform} separates the part of $\gamma_k(n)$ dependent on $n$ from the part that requires computation of Schubert structure constants. With this result, we can now prove Theorem~\ref{thm:main} after computing only the simplest $c_{u, v}^w$.

\begin{lemma}\label{lemma:schubcomp}
    If $w\in W_n$ has Dynkin support $\Delta(w) = A_1^k := \underbrace{A_1\sqcup\dots\sqcup A_1}_{\text{k factors}}$, then 
    \[\sum_{u, v\in W_n} c_{u, v}^w = 2^k.\]
\end{lemma}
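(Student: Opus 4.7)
The plan is to reduce to the case of a single $A_1$ factor by invoking Proposition~\ref{prop:equiprod}, then to compute that single case by hand. Since $\Delta(w) = A_1^k$, the element $w$ factors over a parabolic subgroup $W_{A_1}\times W_{A_1}\times\cdots\times W_{A_1}$ as $w = s_{i_1}\times s_{i_2}\times\cdots\times s_{i_k}$, where $s_{i_1},\ldots,s_{i_k}$ are pairwise commuting simple reflections. Iterating Proposition~\ref{prop:equiprod} and then specializing $\alpha_j\mapsto 0$ gives
\[\sum_{u, v\in W_n}c_{u, v}^w = \sum_{\substack{u = u_1\times\cdots\times u_k\\ v = v_1\times\cdots\times v_k}}\prod_{j=1}^k c_{u_j, v_j}^{s_{i_j}} = \prod_{j=1}^k\left(\sum_{u_j, v_j\in W_{A_1}}c_{u_j, v_j}^{s_{i_j}}\right),\]
since (by Lemma~\ref{lemma:0restriction} applied after the factorization) only factored pairs $u, v$ contribute to the sum.

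Next I would compute each inner sum. For fixed $j$, the elements $u_j, v_j\leq s_{i_j}$ are just $e$ and $s_{i_j}$ itself. The ordinary structure constant $c_{u_j, v_j}^{s_{i_j}}$ vanishes unless $\ell(u_j) + \ell(v_j) = \ell(s_{i_j}) = 1$ (this is the standard grading constraint: $\sigma_{u_j}\smile\sigma_{v_j}$ is homogeneous of cohomological degree $\ell(u_j)+\ell(v_j)$, equivalently this is the specialization to $\alpha = 0$ of the homogeneous polynomial $C_{u_j,v_j}^{s_{i_j}}$ of degree $\ell(s_{i_j})-\ell(u_j)-\ell(v_j)$). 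So only $(u_j, v_j) = (e, s_{i_j})$ and $(s_{i_j}, e)$ contribute, and both give the value $1$ since $\sigma_e$ is the unit class. Thus each inner sum equals $2$, and multiplying gives $2^k$.

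There is no real obstacle here: the proof is just the combination of the parabolic factorization of Proposition~\ref{prop:equiprod} with the trivial Weyl-group computation in rank one. The only subtlety is justifying that the product formula of Proposition~\ref{prop:equiprod}, stated for equivariant constants $C_{u,v}^w$, descends to the ordinary constants $c_{u,v}^w$; this is immediate by setting $\alpha_1 = \cdots = \alpha_n = 0$ on both sides.
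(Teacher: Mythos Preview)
Your proof is correct and follows essentially the same approach as the paper: reduce to the rank-one case via the parabolic factorization of Proposition~\ref{prop:equiprod}, then compute the $A_1$ sum by hand. The only cosmetic difference is that the paper also invokes Proposition~\ref{prop:equiembed} to identify each rank-one factor with $\mathcal{S}_2$, whereas you compute directly in the parabolic $\langle s_{i_j}\rangle$; both yield the same $2^k$.
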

\begin{proof}
    In the case where $k = 1$, we compute directly: $c_{s_1, id}^{s_1} = c_{id, s_1}^{s_1} = 1$ and $c_{u, v}^{s_1} = 0$ otherwise, so the sum is $2$ as claimed. For general $k$, if $\Delta(w) = A_1^k$, then $w$ factors as $s_1\times\dots\times s_1$. Thus by Proposition~\ref{prop:equiembed} and Proposition~\ref{prop:equiprod} we have
    \[\sum_{u, v\in W_n}c_{u, v}^w = \left(\sum_{u, v\in \mathcal{S}_2}c_{u, v}^{s_1}\right)^k = 2^k.\]
\end{proof}

\begin{proof}[Proof of Theorem~\ref{thm:main}]
    By Theorem~\ref{thm:fullform}, 
    \[LT(\gamma_k(n)) = \sum_{[w]}\left(\sum_{u, v\leq w}c_{u, v}^w\right)N_w(n),\]
    where the sum is over equivalence classes $[w]$ of length $k$ maximizing the degree of $N_w(n)$. By Proposition~\ref{prop:poly}, the degree of $N_w(n)$ is the number of components in $\Delta(w)$. Among $w\in W_{2k}$ of length $k$, the number of components of $\Delta(w)$ is maximized when $\Delta(w) = A_1^k$, i.e., when $w$ is a product of $k$ mutually orthogonal simple reflections. By Lemma~\ref{lemma:schubcomp} and Proposition~\ref{prop:poly} it follows that
    \[LT(\gamma_k(n)) = 2^k LT(N_w(n)) = \frac{(2n)^k}{k!},\]
    completing the proof.
\end{proof}

\section*{Acknowledgements}
We thank Andrew Hardt, Colleen Robichaux, and Alexander Yong for helpful conversations. We were supported by an NSF graduate fellowship and an
NSF RTG in Combinatorics (DMS 1937241) while preparing this material.

\end{document}